\documentclass[a4paper,12pt]{article}
\usepackage{amsmath, amsthm, amstext}
\usepackage{amssymb, array, amsfonts}
\usepackage[utf8]{inputenc}
\usepackage{graphicx}

\numberwithin{equation}{section}

\def \la{\lambda}

\def \ph{\varphi}
\def \oo{\omega}

\def \D{\Delta}

\def \O{\Omega}

\def \N{\mathbb{N}}
\def \R{\mathbb{R}}

\def\n{\nabla}
\def\dd{\partial}

\def\1{1\!\!\!\!1}

\def\dom{\operatorname{Dom}}

\def\mes{\operatorname{mes}}

\theoremstyle{plain}
\newtheorem{theorem}{\bf Theorem}[section]
\newtheorem{lemma}[theorem]{\bf Lemma}

\theoremstyle{definition}

\theoremstyle{remark}
\newtheorem{rem}[theorem]{\bf Remark}

\renewcommand{\le}{\leqslant}
\renewcommand{\ge}{\geqslant}

\title{On the inequality $\mu_3 < \la_1$ for the eigenvalues of the Laplace operator}
\author{N.~D.~Filonov}
\date{}
\begin{document}
\maketitle

\begin{abstract}
In 1952, Pólya proved the inequality $\mu_2 < \la_1$ for the eigenvalues of 
Dirichlet and Neumann problems in domain $\O \subset \R^d$, $d \ge 2$.
In 2026, Tang and Zhang proved the inequality $\mu_3 < \la_1$ in 
the two-dimensional case. 
We show that $\mu_3 < \la_1$ for $d\ge 3$ as well.
\footnote{Keywords: inequalities for eigenvalues, Laplace operator, 
Dirichlet problem, Neumann problem.}
\end{abstract}

\section{Formulation of the result}
Let $d \ge 2$, $\O \subset \R^d$ be a bounded domain 
such that the embedding $W_2^1 (\O) \subset L_2 (\O)$ is compact,
where $W_2^1 (\O)$ is the Sobolev space.
We consider Dirichlet and Neumann problems for the Laplace operator in $\O$:
$$
\begin{cases}
- \D \ph_k = \la_k \ph_k, \\
\left.\ph_k\right|_{\dd\O} = 0,
\end{cases}
\qquad
\begin{cases}
- \D \psi_k = \mu_k \psi_k, \\
\left.\frac{\dd\psi_k}{\dd\nu}\right|_{\dd\O} = 0.
\end{cases}
$$
Spectra of these two problems are discrete,
the eigenvalues form two sequences
\begin{equation*}
0 < \la_1 < \la_2 \le \la_3 \le \dots, \qquad \la_k \to + \infty, 
\end{equation*}
\begin{equation*}
0 = \mu_1 < \mu_2 \le \mu_3 \le \dots, \qquad \mu_k \to + \infty.
\end{equation*}
We take multiplicity into account.
In 1952, Kornhauser and Stakgold proved in \cite{KornStak} on the physical level of rigor
the inequality
\begin{equation}
\label{mu2}
\mu_2 < \la_1
\end{equation}
in two-dimensional case.
They needed it to investigate the spectrum 
of electromagnetic waves in a three-dimensional cylinder.
\footnote{For the connection between this inequality and 
the spectrum of the Maxwell operator, see, for example, 
\cite{F18, F24}.}
In his paper \cite{P52}, published in the same journal issue as \cite{KornStak},
Pólya used their ideas to provide a rigorous proof of the inequality \eqref{mu2}.
His proof works in any dimension.
This work by Pólya caused a stream of papers 
devoted to the improvements of this inequality,
see \cite{Payne55, Av86, LW86, Fr91, F04, R25, TZh}.
In particular, Levine and Weinberger proved in \cite{LW86} the inequality
\begin{equation*}
\mu_{k+d} \le \la_k \qquad \forall \ k \in \N, \ 
\text{for all convex domains} \ \O \subset \R^d.
\end{equation*}
Friedlander proved (see \cite{Fr91, F04}) the inequality
\begin{equation*}
\mu_{k+1} < \la_k
\end{equation*}
in all dimensions $d \ge 2$.
In the two-dimensional case,
Rohleder proved in \cite{R25} the inequalities
$$
\mu_{k+2} \le \la_k \qquad \text{and} \qquad \mu_3 < \la_1
$$
for simply-connected Lipschitz domains,
and Tang and Zhang proved in \cite{TZh}
the inequality $\mu_3 < \la_1$ for all planar domains with Lipschitz boundary.

We prove

\begin{theorem}
\label{t11}
Let $d \ge 3$, $\O \subset \R^d$ be a bounded domain 
such that the embedding $W_2^1 (\O) \subset L_2 (\O)$ is compact.
Then
$\mu_3 < \la_1$.
\end{theorem}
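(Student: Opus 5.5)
The plan is to apply the variational principle to the quadratic form
$$
F[u]=\int_\O |\n u|^2\,dx-\la_1\int_\O |u|^2\,dx ,\qquad u\in W_2^1(\O),
$$
in the complex space $W_2^1(\O)$. This changes nothing, since the operator is real and multiplicities coincide. If $F\le 0$ on a $3$-dimensional subspace $L\subset W_2^1(\O)$, then $\mu_3\le\la_1$, and a separate argument will exclude equality. As in Friedlander's and Filonov's proofs of $\mu_{k+1}<\la_k$, $L$ will be spanned by $\ph_1$ and plane waves $e_\xi(x)=e^{i\xi\cdot x}$ with $|\xi|^2=\la_1$.

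\emph{Step 1 (orthogonality of $\ph_1$).} For every such $\xi$ we have $F[e_\xi]=(|\xi|^2-\la_1)|\O|=0$ and $F[\ph_1]=0$. Since $\ph_1\in \overset{\circ}{W}{}_2^1(\O)$, integration by parts gives
$$
\int_\O \n\ph_1\cdot\overline{\n e_\xi}\,dx=-\int_\O \ph_1\,\overline{\D e_\xi}\,dx=\la_1\int_\O \ph_1\overline{e_\xi}\,dx ,
$$
so the mixed term of $F$ between $\ph_1$ and any combination of the $e_\xi$ vanishes. Hence it suffices to find a $2$-dimensional subspace of $\operatorname{span}\{e_\xi : |\xi|^2=\la_1\}$ on which $F\le 0$.

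\emph{Step 2 (the Gram-type matrix).} For points $\xi_1,\dots,\xi_n$ on the sphere $|\xi|^2=\la_1$, the matrix of $F$ is
$$
m_{jk}=(\xi_j\cdot\xi_k-\la_1)\,\widehat{\1_\O}(\xi_j-\xi_k)=-\tfrac12|\xi_j-\xi_k|^2\,\widehat{\1_\O}(\xi_j-\xi_k),
$$
where $\widehat{\1_\O}(\zeta)=\int_\O e^{-i\zeta\cdot x}\,dx$. This matrix is Hermitian with zero diagonal. Take $n=3$. The trace is zero, so if the eigenvalues are $a\ge b\ge c$ then $a\ge 0\ge c$, and $b\le 0$ holds iff $\det M=abc\ge 0$. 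Here
$$
\det M=2\,\mathrm{Re}\,(m_{12}m_{23}m_{31}).
$$
So the problem reduces to finding three points on the sphere with $\mathrm{Re}\,(m_{12}m_{23}m_{31})\ge 0$. Two eigenvalues $\le 0$ together with $\ph_1$ give the required $3$-dimensional $L$. Alternatively, a single pair $\xi\ne\eta$ with $\widehat{\1_\O}(\xi-\eta)=0$ would already work.

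\emph{Step 3 (main obstacle: the sign of the triple product).} This is where $d\ge 3$ must enter. In $d=2$ the configurations of triples on a circle are too rigid, whereas for $d\ge3$ the configuration space of triples on $S^{d-1}$ is connected and rich enough to carry a continuity argument. My first attempt is to examine the function
$$
\Phi(\xi_1,\xi_2,\xi_3)=\mathrm{Re}\,(m_{12}m_{23}m_{31})
$$
on this space and show it takes a nonnegative value. If it changes sign, there is a zero by connectedness. If it were strictly negative everywhere, I would seek a contradiction by averaging $\Phi$ over rotations $\xi_j\mapsto R\xi_j$, $R\in SO(d)$, with a fixed triangle shape. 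The average of $\prod \widehat{\1_\O}$ over rotations is expressible through integrals of products of Bessel-type kernels $|x-y|^{1-d/2}J_{d/2-1}(\cdot)$ over $\O^3$, and its sign should be controllable, for example by letting the triangle degenerate and tracking the leading term. I expect this step to be the real difficulty.

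\emph{Step 4 (strictness).} Suppose $\mu_3=\la_1$ and $F\le0$ on $L$. By minimality, some nonzero $u\in L$ with $F[u]=0$ is a Neumann eigenfunction: $-\D u=\la_1 u$ with $\dd_\nu u=0$. Write $u=c\ph_1+h$, where $h$ is a combination of plane waves with $\D h+\la_1h=0$ on all of $\R^d$. Then $\D(c\ph_1)=\D u-\D h=-\la_1 c\ph_1$ holds in $\O$ as functions, with no measure concentrated on $\dd\O$. The next claim is that $c\,\dd_\nu\ph_1=\dd_\nu h-\dd_\nu u=\dd_\nu h$ on $\dd\O$ in the weak sense. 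Testing against the constant function, or invoking the unique continuation argument of Filonov's 2004 paper, should then force $h\equiv0$. That in turn gives $c\,\ph_1$ Neumann, which is impossible for $c\ne0$. Hence $\mu_3<\la_1$.
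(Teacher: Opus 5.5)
\textbf{There is a genuine gap: Step~3 is unproved, and it carries the whole argument.} Your Steps~1 and~2 are correct. The cross term with $\ph_1$ vanishes. For three points, $F\le 0$ on a $2$-dimensional subspace of their span exactly when $\det M=2\operatorname{Re}(m_{12}m_{23}m_{31})\ge 0$. But these steps only reformulate the problem. For an arbitrary bounded $\O$ you must produce distinct $\xi_1,\xi_2,\xi_3$ with $|\xi_j|^2=\la_1$ and
$$\operatorname{Re}\bigl[\widehat{\1_\O}(\xi_1-\xi_2)\,\widehat{\1_\O}(\xi_2-\xi_3)\,\widehat{\1_\O}(\xi_3-\xi_1)\bigr]\le 0,$$
or else a zero of $\widehat{\1_\O}$ in the ball $|\zeta|\le 2\sqrt{\la_1}$. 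You give no argument for either, and the tools you mention do not supply one:
\begin{itemize}
\item Connectedness is irrelevant. A single nonnegative value of $\Phi$ suffices; the whole question is whether one exists.
\item Degenerations give the wrong sign. If all three points merge, $\widehat{\1_\O}\to|\O|$ and $\Phi<0$. If only $\xi_1\to\xi_2$, then $\Phi\approx-\frac12|\xi_1-\xi_2|^2|\O|\,|m_{23}|^2\le 0$.
\item Averaging also points the wrong way. Average each $\xi_j$ independently over the sphere. The unweighted triple product becomes $\int_{\O^3}K(x-y)K(y-z)K(z-x)\,dx\,dy\,dz=\operatorname{tr}T^3>0$. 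Here $K$ is the Fourier transform of the normalized surface measure, and $T\ge 0$ is the operator on $L_2(\O)$ with kernel $K(x-y)$. So, averaged over shapes, your rotational averages have exactly the sign you want to exclude.
\item The extra freedom in $d\ge 3$ is never turned into an inequality. That freedom is that every triangle of circumradius $\le\sqrt{\la_1}$ fits on the sphere.
\end{itemize}
As it stands this is a research programme whose key step is open, not a proof.

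\textbf{Step~4 (strictness) also fails as written.} Testing with constants gives only $c\int_\O\ph_1+\int_\O h=0$, not $h\equiv 0$. Moreover, a plane-wave combination on the sphere $|\xi|^2=\la_1$ can be a Neumann eigenfunction with eigenvalue $\la_1$. For the cube $(0,\pi)^3$ one has $\la_1=3$, and $\cos x_1\cos x_2\cos x_3=\frac18\sum_{\epsilon\in\{\pm1\}^3}e^{i\epsilon\cdot x}$ is such an eigenfunction. Filonov's strictness argument needs infinitely many admissible configurations, involving infinitely many distinct $\xi$, to beat the finite dimension of the eigenspaces. A single triple from Step~3 would not give that.

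\textbf{The paper's route avoids test functions altogether.}
\begin{itemize}
\item Faber--Krahn bounds $\la_1(\O)$ below by $\la_1$ of the ball of the same volume.
\item Bucur--Henrot bounds $\mu_3(\O)$ above by $\mu_2$ of a ball of half the volume.
\item At least one of these is strict, since $\O$ cannot be both a ball and two balls. Hence $\mu_3/\la_1<c_d=\bigl(2^{1/d}p'_{d,1}/j_{\frac d2-1}\bigr)^2$.
\item Then $c_d<1$ for $d\ge 3$ follows from $(p'_{d,1})^2\le d(d-1)/(d-2)$, from $j_{\frac d2-1}>d/2$, and from a table for $d\le 6$.
\end{itemize}
That numerical comparison is where $d\ge 3$ enters, since $c_2\approx 1.17$.
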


\section{Proof}
Theorem \ref{t11} follows in a rather simple way 
from the following two well-known facts.
First is that the minimal value of $\la_1$ among domains of fixed volume achieved on the ball.

\begin{theorem}[Faber-Krahn, \cite{Faber, Krahn}]
\label{t21}
Let $\O \subset \R^d$ be a bounded domain.
Then 
$$
\la_1 (\O) \ge \la_1 (B_R),
$$
where $B_R$ is a ball of the same volume, $\mes_d B_R = \mes_d \O$.
Equality is achieved only in the case of a ball.
\end{theorem}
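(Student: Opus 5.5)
The plan is the classical symmetrization argument: Schwarz symmetrization, the Pólya--Szeg\H{o} inequality, and then a separate look at the equality case. Let $\ph_1$ be a first Dirichlet eigenfunction of $\O$. Since $|\ph_1|$ has the same Rayleigh quotient, we may take $\ph_1 \ge 0$. The Harnack inequality then gives $\ph_1 > 0$ in the connected set $\O$. Extend $\ph_1$ by zero to a function $u \in W_2^1(\R^d)$, and let $u^*$ be its symmetric decreasing rearrangement. That is, $u^*$ is radial and nonincreasing in $|x|$, and $\mes_d\{u^*>t\} = \mes_d\{u>t\}$ for every $t>0$. Since $\{u>0\}=\O$, the function $u^*$ is supported in $\overline{B_R}$ with $\mes_d B_R = \mes_d \O$, and $u^* \in \overset{\circ}{W}{}_2^1(B_R)$.

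By equimeasurability, $\|u^*\|_{L_2} = \|u\|_{L_2}$. The key step is the Pólya--Szeg\H{o} inequality $\|\n u^*\|_{L_2} \le \|\n u\|_{L_2}$. To prove it, set $m(t) = \mes_d\{u>t\}$. The coarea formula gives
$$
-m'(t) \ge \int_{\{u=t\}} \frac{dS}{|\n u|}, \qquad \int |\n u|^2\,dx = \int_0^\infty \int_{\{u=t\}} |\n u|\,dS\,dt .
$$
Cauchy--Schwarz on each level set gives
$$
\int_{\{u=t\}} |\n u|\,dS \ge \frac{P(\{u>t\})^2}{-m'(t)} .
$$
The isoperimetric inequality then gives $P(\{u>t\}) \ge P(\{u^*>t\})$. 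For $u^*$, both the coarea relation and the Cauchy--Schwarz step are equalities, because $|\n u^*|$ is constant on spheres. Combining these, the Rayleigh quotient of $u^*$ on $B_R$ does not exceed that of $\ph_1$ on $\O$, so
$$
\la_1(B_R) \le \frac{\|\n u^*\|^2}{\|u^*\|^2} \le \frac{\|\n u\|^2}{\|u\|^2} = \la_1(\O) .
$$
The technical points here are absolute continuity and the handling of the possible set where $\n u = 0$. I would take these from the standard treatment, approximating $u$ by smooth functions if needed.

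The equality case is the main obstacle. Suppose $\la_1(\O) = \la_1(B_R)$. Then every inequality above is an equality for a.e. $t$. In particular, equality holds in the isoperimetric inequality, so a.e. superlevel set $\{u>t\}$ is a ball up to a null set. To make this rigorous, I would use that $\ph_1$ is real analytic in $\O$. Hence the critical set $\{\n\ph_1 = 0\}$ has measure zero, and the coarea inequality for $m'$ is an equality, which closes the usual loophole of flat pieces of $u$.

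The superlevel sets are nested, so they form an increasing family of balls as $t \downarrow 0$. Since $\ph_1>0$ throughout $\O$, their union is $\O$. An increasing union of balls is an open ball, so $\O$ coincides with a ball of the same volume, namely a translate of $B_R$, up to a null set. For the open connected sets considered here, this identification should give $\O = B_R$ up to translation. If this last step turns out to be delicate for very irregular $\O$, I would only need the weaker form that strict inequality holds whenever $\O$ is not a ball up to measure zero.
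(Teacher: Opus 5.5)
The paper gives no proof of this statement; it only cites Faber and Krahn. Your symmetrization argument via the P\'olya--Szeg\H{o} inequality is the standard proof behind that citation, and for the inequality $\la_1(\O)\ge\la_1(B_R)$ it is fine, modulo the technicalities you defer. One of these deserves a word, because in the equality case you cannot handle it by approximation. For $u^*$, the identity $-m'(t)=\int_{\{u^*=t\}}dS/|\n u^*|$ does not follow from $|\n u^*|$ being constant on spheres. It holds because the radial profile $v$ of $u^*$ is locally absolutely continuous on $(0,R)$. Hence $v$ maps $\{v'=0\}$ onto a null set, so the critical set of $u^*$ adds only a singular part to $m$.

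The genuine problem is your last step. The implication ``$\O$ open, connected, and equal to a ball up to a null set $\Rightarrow$ $\O$ is a ball'' is false, and irregularity has nothing to do with it. Take $\O=B_R\setminus\{0\}$: it is a bounded domain and even has the compact embedding $W_2^1(\O)\subset L_2(\O)$. A point has zero capacity for $d\ge2$, so $\overset{\circ}{W}{}_2^1(\O)=\overset{\circ}{W}{}_2^1(B_R)$ and $\la_1(\O)=\la_1(B_R)$. Thus the equality clause, read literally, cannot be proved by any method. The correct statement is that equality holds iff $\O$ is a ball minus a relatively closed set of zero capacity.

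Your argument reaches this with one more step. An open set coinciding a.e.\ with a ball $B$ satisfies $\O\subset B$, since $\O\setminus\overline{B}$ is open and null. Then $u\in\overset{\circ}{W}{}_2^1(B)$ has Rayleigh quotient $\la_1(B)$, so it is the first Dirichlet eigenfunction of $B$, which is continuous and positive in $B$. Since $u$ vanishes quasi-everywhere on $B\setminus\O$, that set has zero capacity.

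Two further remarks. First, the analyticity of $\ph_1$ is not needed for what you conclude. Equality of the perimeters for a.e.\ $t$ already follows from the chain of inequalities, because $0<-m'(t)<\infty$ for a.e.\ $t\in(0,\sup u)$; this holds since $\|\n u^*\|<\infty$. The critical-set condition only makes the level balls concentric, which you never use. Second, your fallback (strict inequality unless $\O$ is a ball up to a null set) is all the paper needs. The equality clause is used only for strictness in Theorem~\ref{t23}, and even that is unnecessary for Theorem~\ref{t11}, since $c_d<1$ for $d\ge3$.
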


The second fact is that the maximal value of $\mu_3$ 
among domains of fixed volume achieved on the disjoint union of two equal balls.

\begin{theorem}[Bucur-Henrot, \cite{BH}]
\label{t22}
Let $\O \subset \R^d$ be a bounded domain.
Then 
$$
\mu_3 (\O) \le \mu_2 (B_{\tilde R}),
$$
where $B_{\tilde R}$ is a ball of half volume, $\mes_d B_{\tilde R} = \frac12 \mes_d \O$.
Equality is achieved only in the case of two equal balls.
\end{theorem}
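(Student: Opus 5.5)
The plan follows the Weinberger--Szegő scheme behind $\mu_2(\O)\le\mu_2(B_R)$, adapted to two ``centers''. Write $\mu_3(\O)=\min_{E}\max_{u\in E\setminus\{0\}}\int|\n u|^2/\int u^2$, the minimum over $3$-dimensional subspaces $E\subset W_2^1(\O)$. Equivalently, it suffices to produce one function $u\ne0$ with $\int_\O u=0$ and $\int_\O u\,\psi_2=0$ whose Rayleigh quotient is at most $\mu_2(B_{\tilde R})$.

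\textbf{Building blocks.} Let $\rho=\mu_2(B_{\tilde R})$ and let $g$ be the radial profile of the second Neumann eigenfunction of $B_{\tilde R}$, i.e.\ $\rho$ is an eigenvalue with eigenfunction $g(|x|)x_i/|x|$. As in Weinberger, extend $g$ by the constant $g(\tilde R)$ for $r>\tilde R$. Then $G(r):=g'(r)^2+\frac{d-1}{r^2}g(r)^2$ is nonincreasing and $g^2$ is nondecreasing. For a point $a\in\R^d$, a radius $t$ and a direction, I form the rescaled Weinberger vector field $x\mapsto g_t(|x-a|)\frac{x-a}{|x-a|}$. For a \emph{pair} of centers $a,b$ I would split $\O$ by the bisecting hyperplane, or more generally by a hyperplane $H$. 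On each side I place one Weinberger field, centered at $a$ on one side and at $b$ on the other, with the scale chosen so that each side is compared with a ball of its own volume.

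\textbf{Orthogonality via a topological argument.} On each piece $\O_\pm$ I would choose the center so that all $d$ components of the field have mean zero on that piece; this is Weinberger's Brouwer fixed-point argument applied piecewise. This gives $d$ functions $u_i^+$ supported on $\overline{\O_+}$ and $d$ functions $u_i^-$ supported on $\overline{\O_-}$, all of which lie in $W_2^1(\O)$ and are orthogonal to constants. Among the linear combinations of the $2d\ge3$ functions I can impose $\int u\,\psi_2=0$. The main difficulty is to do this while keeping a \emph{pointwise summed} Rayleigh estimate. I would therefore instead choose the hyperplane $H$ (a parameter in $S^{d-1}\times\R$) by a degree/Borsuk--Ulam argument, so that the single combination
$$u_i=u_i^+-c\,u_i^-$$
is orthogonal to $\psi_2$ simultaneously with the mean-zero conditions. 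I expect the obstacle to be continuity of the centers in $H$ and the degenerate cases where a piece has small volume; this is where Bucur--Henrot need care. First I would try to show that the Weinberger centers depend continuously on the piece, using strict convexity of the relevant functional.

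\textbf{Rayleigh estimate and rearrangement.} Summing over $i$ gives
$$\sum_i\int|\n u_i|^2\le\int_{\O_+}G(|x-a|)+c^2\int_{\O_-}G(|x-b|),$$
and similarly for $\sum_i\int u_i^2$ with $g^2$. Monotonicity of $G$ and $g^2$ lets me replace $\O_\pm$ by balls of the same volumes centered at $a,b$; this is the mass transplantation step. That reduces the claim to $\max(\mu_2(B^{(1)}),\mu_2(B^{(2)}))$ for two balls with $|B^{(1)}|+|B^{(2)}|=|\O|$. After optimizing $c$ and exploiting the freedom in the splitting, this is at most $\rho$, with equality only when both pieces are balls of volume $\frac12|\O|$.

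For the equality case, equality in the transplantation forces $\O_\pm$ to coincide a.e.\ with the balls. Since $\O$ is then the union of two balls with $\mu_3=\mu_2(B_{\tilde R})$, those balls must be disjoint and of equal volume.
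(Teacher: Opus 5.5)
The paper does not prove this statement. It is quoted from Bucur--Henrot and used as a black box, so your sketch has to stand on its own. It breaks at the claim that the $u_i^\pm$ lie in $W_2^1(\O)$.

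The field $V_a(x)=g_t(|x-a|)\frac{x-a}{|x-a|}$ has modulus $g_t(|x-a|)>0$ on $H\cap\O$. Cutting it off at $H$ therefore produces a jump, and $u_i^\pm\notin W_2^1(\O)$ as soon as $H$ meets $\O$. Your combinations do not repair this. Matching the traces of $u_i^+$ and $c\,u_i^-$ on the relatively open set $H\cap\O$ for every $i$ forces $V_a=-c\,V_b$ there. For $d\ge3$ that would make $x-a$ and $x-b$ parallel on an open piece of a hyperplane, which is impossible unless $a=b$. With $a=b$, equal scales and $c=-1$, you just get back the uncut Weinberger field.

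What you have built are test functions for the cut set $\O\setminus H$: you have imposed an extra Neumann condition on $H$. Since $W_2^1(\O)\subset W_2^1(\O\setminus H)$, min-max gives $\mu_3(\O\setminus H)\le\mu_3(\O)$. So every bound you derive is an upper bound for a quantity lying \emph{below} $\mu_3(\O)$, and it says nothing about $\mu_3(\O)$. Your scheme is in effect the Krahn--Szeg\H{o} argument for the Dirichlet eigenvalue $\la_2$. That argument works only because the cut is the nodal set of an eigenfunction, so the pieces vanish on it. For the Neumann problem no such free cut exists.

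The missing idea is to couple the two sides so that the test field is continuous across $H$ (folding). Take the same scale on both sides and $b=\sigma_H a$, where $\sigma_H$ is the reflection in $H$. Set $U=V_a$ on $\O_+$ and $U=V_a\circ\sigma_H$ on $\O_-$. Equivalently, on $\O_-$ you use the tangential components of $V_b$ and minus its normal component. This has several consequences.
\begin{itemize}
\item There is no free constant $c$ and no piecewise normalization; the mean-zero condition can only be imposed on $\O$ as a whole.
\item Since $\sigma_H$ is an isometry, $\sum_i\int_\O|\n U_i|^2=\int h\,G(|x-a|)\,dx$ and $\sum_i\int_\O U_i^2=\int h\,g^2(|x-a|)\,dx$. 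Here $h=\chi_{\O_+}+\chi_{\sigma_H(\O_-)}$ is the folded density, with $0\le h\le 2$ and $\int h=\mes_d\O$.
\item The bathtub principle compares $h$ with $2\chi_{B_{\tilde R}(a)}$. This is exactly how the half-volume ball enters, not through balls of volumes $\mes_d\O_\pm$.
\end{itemize}
The hard part, which your sketch does not address, is then a topological argument producing $(a,H)$ for which all $d$ components of this coupled field are orthogonal to both $1$ and $\psi_2$.

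Even granting admissibility, your last step fails as written. The summed quotient of $u_i^+-c\,u_i^-$ is a weighted mean of the two pieces' quotients. So transplantation yields only $\max\{\mu_2(B^{(1)}),\mu_2(B^{(2)})\}$. By scaling this is $\ge\mu_2(B_{\tilde R})$, with equality only when $H$ halves the volume, and it blows up as one piece shrinks. Since $c$ and $H$ were already spent on orthogonality to $\psi_2$, there is no freedom left to ``optimize $c$'' or to enforce the halving.
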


The first eigenvalue of the Dirichlet problem in a ball is well known:
\begin{equation*}
\la_1 (B_R) = \left(\frac{j_{\frac{d}2-1}}R\right)^2,
\end{equation*}
where $j_\nu$ denotes the first positive root of the Bessel function $J_\nu$. 
The equality $\mes_d B_R = \mes_d \O$ implies
$$
R = \frac{(\mes_d \O)^{1/d}}{\oo_d^{1/d}}, 
$$
where $\oo_d$ denotes the volume of the unit ball in $\R^d$.
Therefore,
\begin{equation}
\label{21}
\la_1 (\O) \ge \la_1 (B_R) = \frac{j_{\frac{d}2-1}^2 \oo_d^{2/d}}{(\mes_d \O)^{2/d}}.
\end{equation}

The second eigenvalue of the Neumann problem in a ball is also well known:
\begin{equation*}
\mu_2 (B_{\tilde R}) = \left(\frac{p_{d,1}'}{\tilde R}\right)^2,
\end{equation*}
where for natural $m$ we denote by $p_{d,m}'$ the first positive root of the function
$\left(r^{1-\frac{d}2} J_{\frac{d}2 + m -1} (r)\right)'$.
The equality $\mes_d B_{\tilde R} = \frac12 \mes_d \O$ implies
$$
\tilde R = \frac{(\mes_d \O)^{1/d}}{(2 \oo_d)^{1/d}}.
$$
Therefore,
\begin{equation}
\label{22}
\mu_3 (\O) \le \mu_2 (B_{\tilde R}) =
\frac{2^{2/d} (p_{d,1}')^2 \oo_d^{2/d}}{(\mes_d \O)^{2/d}}.
\end{equation}

The bounds \eqref{21} and \eqref{22} together yield
\begin{theorem}
\label{t23}
Let $\O \subset \R^d$ be a bounded domain 
such that the embedding $W_2^1 (\O) \subset L_2 (\O)$ is compact.
Then
$$
\frac{\mu_3 (\O)}{\la_1 (\O)} < \left(\frac{2^{1/d} p_{d,1}'}{j_{\frac{d}2-1}}\right)^2 =: c_d.
$$
\end{theorem}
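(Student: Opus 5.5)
The plan is to divide \eqref{22} by \eqref{21}. Both bounds scale as $(\mes_d \O)^{-2/d}$, so the volume factor cancels. The factor $\oo_d^{2/d}$ also cancels. What remains is
$$
\frac{\mu_3(\O)}{\la_1(\O)} \le \frac{2^{2/d}(p'_{d,1})^2}{j_{\frac d2-1}^2} = c_d .
$$
Before dividing, I would check the hypotheses. Theorem \ref{t21} and Theorem \ref{t22} need only boundedness of $\O$. The compactness assumption is used only to guarantee that $\mu_3(\O)$ and $\la_1(\O)$ are genuine discrete eigenvalues, so the ratio makes sense. Since $\la_1(\O)>0$, the division is legitimate.

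The only real content is upgrading the non-strict inequality to a strict one. I would argue by contradiction. Suppose $\mu_3(\O)/\la_1(\O)=c_d$. Then equality holds in both \eqref{21} and \eqref{22}. Indeed, writing
$$
\mu_3(\O)\le A\,(\mes_d\O)^{-2/d}, \qquad \la_1(\O)\ge B\,(\mes_d\O)^{-2/d},
$$
the ratio can equal $A/B$ only if $\mu_3(\O)=A(\mes_d\O)^{-2/d}$ and $\la_1(\O)=B(\mes_d\O)^{-2/d}$ simultaneously. If either inequality were strict, the quotient would be strictly smaller than $A/B$.

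Now I would apply the two equality cases. By Theorem \ref{t21}, equality in \eqref{21} forces $\O$ to be a ball. By Theorem \ref{t22}, equality in \eqref{22} forces $\O$ to be the disjoint union of two equal balls. A domain is connected, so it cannot be a disjoint union of two balls; a ball, in turn, is not a union of two disjoint balls. These conclusions are incompatible, which gives the strict inequality.

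The main obstacle is making the equality statements precise. If they are understood only up to sets of capacity zero or measure zero, I would argue the same way: a ball and a union of two disjoint equal balls differ on a set of positive measure, so they cannot both be the equality domain. No numerical estimate of $c_d$ is needed at this stage; that is presumably the next step, showing $c_d<1$ for $d\ge3$ to deduce Theorem \ref{t11}.
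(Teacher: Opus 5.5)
Your proposal is correct and is essentially the paper's argument. Dividing \eqref{22} by \eqref{21} cancels $(\mes_d \O)^{-2/d}$ and $\oo_d^{2/d}$, and strictness follows because equality would force $\O$ to be both a ball (Theorem \ref{t21}) and a union of two equal balls (Theorem \ref{t22}); your remark that connectedness of $\O$ already excludes the equality case in \eqref{22} just makes the paper's one-line justification more explicit.
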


The inequality is strict here, because a domain cannot be simultaneously 
a ball and the union of two balls.

Note that in the two-dimensional case this constant is $c_2 \approx 1,17$,
and this inequality is weaker than the Tang-Zhang result cited above \cite{TZh}.

Let us prove that $c_d < 1$ whenever $d \ge 3$.
If $m \ge 1$ the numbers $(p_{d,m}')^2$ coincide with the minima
of the following ratio of quadratic forms
\begin{equation}
\label{ratio}
(p_{d,m}')^2 = \min \frac{\int_0^1\left(|f'(x)|^2 x^{d-1} 
+ m (m+d-2) |f(x)|^2 x^{d-3}\right) dx}
{\int_0^1|f(x)|^2 x^{d-1} dx},
\end{equation}
defined on the set
\begin{equation*}
\dom = \left\{ f : 
\int_0^1\left(|f'(x)|^2 x^{d-1} + m (m+d-2) |f(x)|^2 x^{d-3} \right) < \infty \right\}.
\end{equation*}

\begin{lemma}
If $m\ge 1$ and $d\ge 3$ then
\begin{equation}
\label{45}
m(m+d-2) \le (p_{d,m}')^2 \le \frac{dm(m+d-2)}{d-2}.
\end{equation}
\end{lemma}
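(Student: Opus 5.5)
Both bounds will come from the variational characterization \eqref{ratio}. For the lower bound, we observe that $x^{d-1}\le x^{d-3}$ on $(0,1]$. Hence, for every admissible $f$,
$$
m(m+d-2)\int_0^1 |f|^2 x^{d-3}\,dx \ge m(m+d-2)\int_0^1 |f|^2 x^{d-1}\,dx .
$$
Dropping the nonnegative term $\int_0^1 |f'|^2 x^{d-1}\,dx$, the Rayleigh quotient in \eqref{ratio} is at least $m(m+d-2)$. Taking the minimum gives $(p_{d,m}')^2\ge m(m+d-2)$.

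For the upper bound, we will use a test function, namely the harmonic-polynomial radial profile $f(x)=x^m$. This is natural because $x^m Y_m(\theta)$ is harmonic, so only the angular term together with $|f'|^2$ contributes, and these combine nicely. We first check that $f\in\dom$: we have $|f'|^2x^{d-1}=m^2x^{2m+d-3}$ and $|f|^2x^{d-3}=x^{2m+d-3}$, both integrable since $2m+d-3\ge 0$. The numerator is
$$
\int_0^1\bigl(m^2+m(m+d-2)\bigr)x^{2m+d-3}\,dx=\frac{m(2m+d-2)}{2m+d-2}=m .
$$
The denominator is $\int_0^1 x^{2m+d-1}\,dx=\frac1{2m+d}$. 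So the quotient equals $m(2m+d)$, and therefore $(p_{d,m}')^2\le m(2m+d)$.

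It remains to compare $m(2m+d)$ with $\frac{dm(m+d-2)}{d-2}$. After dividing by $m$ and multiplying by $d-2>0$, this amounts to
$$
(2m+d)(d-2)\le d(m+d-2),
$$
that is, $2md-4m+d^2-2d\le md+d^2-2d$, which is $m(d-4)\le 0$. This holds only for $d\le 4$, so the plain test function $x^m$ is insufficient in general; this is the main obstacle.

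To get around it, I would use the family $f(x)=x^{a}$ with $a>\frac{2-d}{2}$ and optimize over $a$. The numerator becomes $\frac{a^2+m(m+d-2)}{2a+d-2}$ and the denominator $\frac1{2a+d}$. Since $2a+d\le 2\frac{2a+d-2}{2}\cdot\frac{d}{d-2}$ fails for large $a$, the right choice is $a\to 0$ (permitted because $d\ge3$). This gives the quotient
$$
\frac{m(m+d-2)\,d}{d-2},
$$
which is exactly the claimed upper bound. The integrability check needs $2a+d-3>-1$, i.e.\ $a>\frac{2-d}{2}$, so $a=0$ (constant $f\equiv1$) is admissible: $\int_0^1 x^{d-3}\,dx=\frac1{d-2}<\infty$ precisely when $d\ge 3$. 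Thus I would simply take $f\equiv 1$. The quotient is then
$$
\frac{m(m+d-2)/(d-2)}{1/d}=\frac{dm(m+d-2)}{d-2},
$$
which proves \eqref{45}. The only delicate point is confirming that $f\equiv1$ lies in $\dom$, which is where the hypothesis $d\ge3$ enters.
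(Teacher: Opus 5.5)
Your proof is correct and is essentially the paper's: the lower bound comes from $x^{d-3}\ge x^{d-1}$ on $(0,1]$, and the upper bound from the test function $f\equiv 1$, which lies in $\dom$ exactly because $d\ge 3$. The detour through $x^m$ and the family $x^a$ is unnecessary. Your claim that $a=0$ is the optimal choice is also inaccurate, since the quotient is still decreasing in $a$ at $a=0$. Neither point affects validity, because any admissible test function gives an upper bound.
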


\begin{proof}
Clearly, the second term in the numerator of \eqref{ratio} satisfies
$$
\int_0^1 |f(x)|^2 x^{d-3} dx \ge \int_0^1|f(x)|^2 x^{d-1} dx ,
$$
and the first inequality \eqref{45} follows.

Next, we substitute $f(x) \equiv 1$ in \eqref{ratio}.
We get
$$
\int_0^1\left(|f'(x)|^2 x^{d-1} + m (m+d-2) |f(x)|^2 x^{d-3}\right) dx
= \frac{m(m+d-2)}{d-2},
$$
$$
\int_0^1|f(x)|^2 x^{d-1} dx = \frac1d,
$$
which implies the second inequality \eqref{45}.
\end{proof}

\begin{rem}
The lower bound \eqref{45} is just the same as (1'') in \cite{LS}.
The upper bound \eqref{45} is better than (1) in \cite{LS} for $d \ge 3$.
\end{rem}

\begin{lemma}
\label{l27}
We have 
$$
j_{\frac{d}2-1} > 2^{1/d} p_{d,1}' 
$$
whenever $d \ge 3$.
\end{lemma}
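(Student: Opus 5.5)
We compare lower and upper bounds. By the preceding Lemma with $m=1$,
$$
(p_{d,1}')^2 \le \frac{d(d-1)}{d-2},
$$
so it suffices to prove
\begin{equation*}
j_{\frac d2-1}^2 > 2^{2/d}\,\frac{d(d-1)}{d-2}.
\end{equation*}
For the left side we need a lower bound on the first Bessel zero. Set $\nu=\frac d2-1$. We use the classical bound $j_\nu^2 > (\nu+1)(\nu+5)$ (Rayleigh-type), valid for $\nu>-1$; a stronger inequality such as $j_\nu > \nu + 1.85\,\nu^{1/3}$ would also serve. In terms of $d$, $(\nu+1)(\nu+5)=\frac d2\cdot\frac{d+8}{2}=\frac{d(d+8)}{4}$. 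Hence it is enough to check
$$
\frac{d+8}{4} > 2^{2/d}\,\frac{d-1}{d-2}.
$$

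For large $d$ this is clear: the left side grows linearly, while the right side tends to $1$. More precisely, for $d\ge 4$ we have $2^{2/d}\le \sqrt2$ and $\frac{d-1}{d-2}\le \frac32$. The right side is then at most $\frac{3\sqrt2}{2}\approx 2.12$, while the left side is at least $3$. So the inequality holds for all $d\ge4$.

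The case $d=3$ is the main obstacle, and the crude bound fails there. The left side is $\frac{11}{4}$, but the right side is $2^{2/3}\cdot 2\approx 3.17$. In this case we compute directly. Here $\nu=\frac12$ and $J_{1/2}(r)\propto r^{-1/2}\sin r$, so $j_{1/2}=\pi$ and $j_{1/2}^2\approx 9.87$. Also $r^{-1/2}J_{3/2}(r)\propto (\sin r - r\cos r)/r^2$. Its derivative vanishes where $\tan r = \frac{2r}{2-r^2}$, and the first positive root is $p_{3,1}'\approx 2.0816$, giving $(p_{3,1}')^2\approx 4.333$. Then $2^{2/3}(p_{3,1}')^2\approx 1.5874\cdot 4.333\approx 6.88<\pi^2$, so the lemma holds for $d=3$.

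To keep the $d=3$ step rigorous without numerics, we can replace $(p_{3,1}')^2$ by the Lemma's bound $\frac{d(d-1)}{d-2}=6$. The requirement then becomes $2^{2/3}\cdot 6\approx 9.52<\pi^2\approx 9.87$, which holds, although narrowly. So $d=3$ follows from the upper bound \eqref{45} together with $j_{1/2}=\pi$. The only delicate points are justifying the Bessel-zero lower bound used for $d\ge4$ and the margin in the $d=3$ case. For moderate $d$ (say $d=4,5$) we could also cross-check against tabulated values $j_1\approx3.8317$ and $j_{3/2}\approx4.4934$.
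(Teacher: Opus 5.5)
Your proof is correct, but it handles the small dimensions differently from the paper. Both arguments start from $(p'_{d,1})^2\le \frac{d(d-1)}{d-2}$.

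\textbf{The paper's route.} It uses the Qu--Wong bound $j_\nu\ge \nu+|a_1|2^{-1/3}\nu^{1/3}$ but extracts from it only $j_{\frac d2-1}>\frac d2$. This settles $d\ge 7$ analytically. The cases $d=3,4,5,6$ are then read off a table of numerically computed $j_{\frac d2-1}$ and $p'_{d,1}$.

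\textbf{Your route.} You use $j_\nu^2>(\nu+1)(\nu+5)$, which is much sharper for small $\nu$, so all $d\ge 4$ follow in one line. For $d=3$ you combine the exact value $j_{1/2}=\pi$ with $(p'_{3,1})^2\le 6$. This reduces the claim to $2^{2/3}\cdot 6<\pi^2$, i.e. $864<\pi^6$. Your route therefore needs no tabulated values at all. It also still gives $c_d\le 2^{2/d}\frac{4(d-1)}{(d-2)(d+8)}=O(1/d)$. The paper's choice of Qu--Wong only pays off through its asymptotically correct form $\nu+O(\nu^{1/3})$, which is irrelevant for this lemma.

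\textbf{Two corrections.}

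First, your aside that $j_\nu>\nu+1.85\,\nu^{1/3}$ is \emph{stronger} and \emph{would also serve} is false exactly where it matters. At $\nu=1$ it gives only $j_1\ge 2.856$, versus $\sqrt{12}\approx 3.464$ from your bound. At $d=4$ it yields $j_1^2\ge 8.16$, while you need $j_1^2>6\sqrt2\approx 8.49$. So Qu--Wong plus the upper bound on $p'_{d,1}$ cannot settle $d=4$, which is why the paper falls back on numerics for small $d$.

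Second, your whole case $d\ge4$ rests on $j_\nu^2>(\nu+1)(\nu+5)$, so you should give a precise reference for it. Alternatively, the genuinely Rayleigh-type bound $j_\nu^2>4(\nu+1)\sqrt{\nu+2}$ already suffices. It follows from $j_{\nu,1}^{-4}<\sum_k j_{\nu,k}^{-4}=\frac{1}{16(\nu+1)^2(\nu+2)}$. It reduces $d\ge 4$ to $2\sqrt{(d+2)/2}>2^{2/d}\frac{d-1}{d-2}$. Here the left side is at least $2\sqrt3\approx 3.46$ and the right side at most $\frac{3\sqrt2}{2}\approx 2.12$. With that substitution your proof becomes entirely self-contained.
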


\begin{proof}
By virtue of \eqref{45} 
\begin{equation}
\label{25}
(p_{d,1}')^2 \le \frac{d(d-1)}{d-2}.
\end{equation}
For zeros of Bessel functions the bound
\begin{equation}
\label{255}
j_\nu \ge \nu + |a_1| 2^{-1/3} \nu^{1/3} ,
\end{equation}
is known, see \cite{QuWong}.
Here $a_1$ is the first zero of the Airy function $Ai$, $a_1 \approx - 2,34$.
Therefore, 
$$
j_{\frac{d}2-1} \ge \frac{d}2 - 1 + |a_1| 2^{-1/3} \left(\frac{d}2-1\right)^{1/3} 
> \frac{d}2
$$
if $d\ge 3$.
So, it suffices to prove that
\begin{equation}
\label{26}
\frac{2^{2/d} (d-1)}{d-2} < \frac{d}4.
\end{equation}
For $d \ge 7$ we have
$$
d \ge \frac{5d}{d-2} > \frac{5(d-1)}{d-2},
$$
and
$$
2^{2/d} \le 2^{2/7} < \frac54.
$$
Thus, \eqref{26} holds true if $d \ge 7$.
For small $d$ we have 
$$
\begin{array}{|c|c|c|}
\hline
d & j_{\frac{d}2-1} & p_{d,1}' \\
\hline
3 & 3,1416 & 2,0816 \\
4 & 3,8317 & 2,2999 \\
5 & 4,4934 & 2,5011 \\
6 & 5,1356 & 2,6889 \\
\hline
\end{array}
$$
We see that 
$$
j_{\frac{d}2-1} > 1,5 \,p_{d,1}' \quad \text{for} \ \ d = 3,4,5,6,
$$
and $2^{1/d} < \sqrt 2 < 1,5$.
\end{proof}

Theorem \ref{t11} follows from Theorem \ref{t23} and Lemma \ref{l27}.

\begin{rem}
From \eqref{25} and \eqref{255} we see that 
$c_d = O(\frac1d)$, $d \to \infty$.
\end{rem}


\end{document}